\documentclass[11pt]{amsart}
\usepackage{amsmath,amsthm,amscd,amsfonts,amssymb,graphicx,paralist,color}
\usepackage{enumitem}
\usepackage[initials]{amsrefs}
\usepackage{enumerate}
\usepackage[initials]{amsrefs}
\usepackage{xcolor}
\usepackage{hyperref}
\usepackage{rotating}
\hypersetup{colorlinks=false,linkbordercolor=red,linkcolor=green,pdfborderstyle={/S/U/W 1}}
\usepackage[skip=0.7\baselineskip]{caption}

\usepackage[utf8]{inputenc}
\usepackage[T1]{fontenc}

\usepackage{tikz}
\usetikzlibrary{trees}

\theoremstyle{plain} 
\newtheorem{theorem}{Theorem}[section]
\newtheorem{lemma}[theorem]{Lemma} 
 
\newtheorem{proposition}[theorem]{Proposition}

\theoremstyle{definition} 
\newtheorem{example}[theorem]{Example}
\newtheorem{remark}[theorem]{Remark}
\newcommand{\R}{\mathbb{R}}
\newcommand{\C}{\mathbb{C}}
\newcommand{\Q}{\mathbb{Q}}

\newcommand{\F}{\mathbb{F}}

\newcommand{\md}{\mathrm{md}}
\newcommand{\dens}{\mathrm{dens}}

\makeatletter
\@namedef{subjclassname@1991}{2020 Mathematics Subject Classification}
\makeatother

\title[Two Notes on Valued  Fields]{Two Notes on Valued  Fields}

\author[Maghsoudi]{S. Maghsoudi}
\address[S. Maghsoudi]{\mbox{}\newline \indent Department of Mathematics, \newline \indent  University of Zanjan, \newline \indent  Zanjan 45371-38791, Iran.}
\email{s\_maghsodi@znu.ac.ir}

\author[Rodríguez-Vidanes]{Daniel L. Rodríguez-Vidanes}
\address[Daniel L. Rodríguez-Vidanes]{\mbox{}\newline \indent Grupo de Investigación: Análisis Matemático y Aplicaciones, \newline \indent Departamento de Matemática Aplicada a la Ingeniería Industrial, \newline \indent Escuela Técnica Superior de Ingeniería y Diseño Industrial, \newline \indent Universidad Politécnica de Madrid, \newline \indent Ronda de Valencia 3, Madrid, 28012, Spain. \newline
\indent \href{http://orcid.org/0000-0002-1016-096X}{ORCID: \texttt{0000-0002-1016-096X} }}
\email{\texttt{dl.rodriguez.vidanes@upm.es}}

\begin{document}
	
	\subjclass{12J10,12J25,54E35}
	
	\keywords{valued fields, metric dimension, uniform openness}


	\begin{abstract}
		This paper studies two questions on valued fields: the metric dimension induced by an absolute value, and the uniform openness of multiplication. 
		For nontrivial non-archimedean absolute values, we prove that the metric dimension equals the density character. 
		In the archimedean case, the metric dimension is 2 for subfields of $\mathbb R$, while for non-real subfields of $\mathbb C$ it is either 2 or 3, according to whether the field contains a non-real element together with its complex conjugate.
		We also show that multiplication is uniformly open on every valued field. 
		Finally, we prove that this property is genuinely metric, not purely topological, even on $\mathbb{R}$ with a suitable compatible choice of metric.
	\end{abstract}
	
	\maketitle

\section{Introduction and Preliminaries}

Throughout the paper, $\F$ is a field endowed with an absolute value $|\cdot|$, i.e., $\F$ is a valued field. Recall that by an absolute value on a field $\F$ we mean a function $|\cdot|: \F \to [0,\infty)$ satisfying 
\begin{itemize} 
    \item[(i)] $|x| = 0$ if and only if $x = 0$,
    \item[(ii)] $|xy| = |x||y|$ for all $x,y \in \F$,
    \item[(iii)] $|x+y| \le |x| + |y|$ for all $x,y \in \F$.
\end{itemize}
The absolute value is \emph{non-archimedean} if it satisfies the stronger inequality:
\begin{itemize}
    \item[(iii')] $|x+y| \le \max(|x|, |y|)$ for all $x,y \in \F$.
\end{itemize}
Recall that the topology induced by an absolute value on a field is discrete if and only if the absolute value is trivial, i.e., $|x|=1$ if $x\neq 0$ and $|0|=0$.
Also, notice that the trivial absolute value is non-archimedean.

Following S. Bau and A. F. Beardon in \cite{1}, we define the metric dimension of a metric space $(X,d)$ as follows. A subset $A \subseteq X$ is called a \emph{resolving set} for $X$ if the map
	\[
	x \longmapsto (d(x,a))_{a \in A}
	\]
from $X$ to $\R^{A}$ is injective. That is, if $d(x,a) = d(y,a)$ for all $a \in A$, then $x = y$. The \emph{metric dimension} of $X$, denoted $\md(X)$, is the smallest cardinality of a resolving set for $X$.

The concept of metric dimension, which quantifies the minimum number of landmarks needed to uniquely determine positions in a metric space, has applications ranging from navigation systems to graph theory. The concept of the metric dimension of metric spaces was introduced in 1953 by L.M. Blumenthal \cite{3}. Twenty years later, F. Harary, R. A. Melter, and P. J. Slater applied it to the metric spaces defined by graphs \cite{7,12}. In \cite{1}   the metric dimension of a sphere in $n$-dimensional Euclidean space has been calculated; for generalizations of these results in the same line see \cite{hm} and references therein. It is known, for instance, that $\md(U)=n+1$ for any non-empty open subset $U\subseteq \R^n$ and $\md(V)={\rm dim}(V)+1$ for any affine subspace of $\R^n$, where ${\rm dim}(V)$ denotes the algebraic dimension of the vector space $V$, see \cite{1}.
More recently, the metric dimension has continued to be studied beyond the setting of graphs and Euclidean spaces.
For instance, Cai and Liao in \cite{CaiLiao2026} computed the metric dimensions of the complex hyperbolic space $\mathbb H^n_{\mathbb C}$ and of the Heisenberg group $\mathcal H$, obtaining the values $2n+1$ and $4$, respectively.
In a different direction, Lladser and Paradise in \cite{LladserParadise2026} studied the resolvability of finite Jaccard spaces $(2^X,\operatorname{Jac})$ and proved that their metric dimension is of order $\Theta(|X|/\ln |X|)$.

Motivated by the classical Banach open mapping principle, in \cite{str} a function $f : X \to Y$ is called \emph{uniformly open} if for every $\varepsilon > 0$, there exists $\delta > 0$ such that for every $x \in X$,
\[
B_{Y}( f(x), \delta ) \subseteq f( B_{X}( x, \varepsilon ) ),
\]
where $B_X(x, \varepsilon)$ is the open ball in $X$ of radius $\varepsilon$ centered at $x$, and similarly for $B_Y$. The problem of uniform openness of multiplication in certain algebras was posed and studied in \cite{str, dra}.

Our aim in this paper is twofold. 
First, we determine the metric dimension of valued fields endowed with the metric induced by an absolute value. 
In the non-archimedean case, we show that for a nontrivial absolute value it coincides with the density character of the field, whereas in the archimedean case it is equal to $2$ or $3$ according to the position of the field inside $\R$ or $\C$ and, in the non-real case, according to whether the field contains a non-real element together with its complex conjugate.
Secondly, we prove that multiplication is uniformly open on every valued field. 
We also show that this phenomenon is genuinely metric, by exhibiting a compatible metric on the usual field topology of $\R$ for which multiplication is open but not uniformly open.

 We shall use standard set-theoretical notation. 
 As usual,   $\mathbb{Q}$, $\mathbb{R}$  and $\mathbb{C}$ denote the sets of all   rational, real  and complex numbers, respectively. 	
 We write ${\rm card}(A)$ or the cardinality of a set $A$, and we set $\aleph_0={\rm card}(\mathbb N)$.
 Also, the complex conjugate and the real part of $z\in\C$ are denoted by $\overline z$ and $\Re(z)$, respectively.

\section{The Metric Dimension}

In this section, we determine the metric dimension of a valued field. To begin,  we need the following two elementary observations. First, let $(X,d)$ be a metric space, $c > 0$, and define $d_c(x,y) := d(x,y)^c$. Then, $A \subseteq X$ is resolving for $(X,d)$ if and only if it is resolving for $(X,d_c)$. Secondly, by Ostrowski's theorem \cite[Theorem 15.2]{she}, any archimedean absolute value on a field $\F$ is equivalent to one of the form $ |x| = |\sigma(x)|_\infty^c $, where $\sigma: \F \to \C$ is an isomorphism, $|\cdot|_\infty$ is the usual absolute value on $\C$, and $c > 0$.

\begin{proposition}\label{thm:main}
	Let $\F$ be a field endowed with an archimedean absolute value $|\cdot|$. Then:
	\begin{itemize}
		\item If $\sigma(\F)\subseteq \R$ (as a metric subspace), then $\md(\F)=2$.
		\item If $\sigma(\F)\nsubseteq \R$, then $2\leq \md(\F)\le 3$. 
		Furthermore, $\md(\F)=3$ if and only if there exists $z\in \sigma(\F)\setminus \R$ such that $\overline z\in \sigma(\F)$.
	\end{itemize}
\end{proposition}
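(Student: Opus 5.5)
By the two observations preceding the statement, we may replace $|\cdot|$ by $|\sigma(\cdot)|_\infty$ and work with the subfield $K=\sigma(\F)\subseteq\C$ under the Euclidean metric; $K$ contains $\Q$. Resolving sets for $\F$ and $K$ correspond bijectively, so $\md(\F)=\md(K)$.

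\emph{Lower bound.} I would show $\md(K)\ge 2$ uniformly. A single point $a$ never resolves $K$, because $a$ and $2a'-a$ are at equal distance from $a'$ whenever $a'\in K$; concretely, $a+1,a-1\in K$ are equidistant from $a$. Together with $K\neq\{a\}$, this excludes resolving sets of size at most $1$.

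\emph{Real case.} If $K\subseteq\R$, take $A=\{0,1\}$. Knowing $|x|$ leaves $x\in\{\pm|x|\}$. When $|x|\neq0$, the two candidates have different distances to $1$, so $\{0,1\}$ resolves $K$ and $\md=2$.

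\emph{Non-real case.} Here $K$ contains $\Q$ and some non-real $w$. Three non-collinear points resolve the plane $\C$, hence $K$, so $\md(K)\le3$. In fact $0,1,w$ already work: a point $x\in\C$ is determined by its distances to three non-collinear points. For the characterization, observe that two real points $a,b\in K$ with $a\neq b$ resolve $K$ exactly when no $x\in K$ has $\overline x\in K$ with $x\neq\overline x$, since equal distances to two distinct reals force $y\in\{x,\overline x\}$. So:
\begin{itemize}
\item If no non-real $z\in K$ has $\overline z\in K$, then $\{0,1\}$ resolves $K$ and $\md=2$.
\item If such a $z$ exists, I must show that no pair $\{a,b\}\subseteq K$ resolves $K$. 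This is the main step.
\end{itemize}

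For a pair $a\neq b$, the reflection $\rho$ across the line through $a$ and $b$ preserves distances to $a$ and $b$. I need some $x\in K$, not on the line, with $\rho(x)\in K$. Writing $u=(x-a)/(b-a)$, the reflection is $\rho(x)=a+(b-a)\overline{u}$. So it suffices to find a non-real $u$ with $u\in L:=\{u : a+(b-a)u\in K\}$ and $\overline u\in L$. Since $a,b\in K$, the set $L$ is exactly $K$, because $K$ is a field. Thus I need $u\in K\setminus\R$ with $\overline u\in K$, and $u=z$ works. Hence $x=a+(b-a)z$ and $\rho(x)=a+(b-a)\overline z$ lie in $K$ and are distinct, since $b\neq a$ and $z\neq\overline z$. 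They are equidistant from $a$ and $b$, so $\md=3$.

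Conversely, if no such $z$ exists, the real pair $\{0,1\}$ above works. This closes the equivalence. The only delicate point is the field-closure identity $L=K$, which is immediate from $a,b\in K$.
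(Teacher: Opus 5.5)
Your proof is correct and follows the paper's argument essentially step for step. It uses the same reduction to a subfield of $\C$ with the Euclidean metric, the same witnesses $a\pm 1$ for the lower bound, and the same resolving pair $\{0,1\}$ in both the real case and the no-conjugate-pair case. Your reflected pair $a+(b-a)z$, $a+(b-a)\overline z$ is exactly the paper's $p+u(q-p)$, $p+\overline u(q-p)$. The only cosmetic difference is that you obtain $\md\le 3$ by quoting the fact that three non-collinear points resolve the plane, whereas the paper verifies directly, via $\Re(x)=\Re(y)$ and $|x|=|y|$, that $\{0,1,\alpha\}$ resolves $\F$.
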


\begin{proof}
	Let $d(x,y)=|x-y|=|\sigma(x)-\sigma(y)|_\infty^c$. 
	As mentioned above, the resolving sets for $(\F,d)$ are the same as for $(\F,|\sigma(\cdot)-\sigma(\cdot)|_\infty)$. 
	Thus, we may assume $c=1$ and identify $\F$ with its image $\sigma(\F)\subseteq \C$, endowed with the usual absolute value. 
	We then consider two cases.
	
	\medskip
	
	\textit{Case 1:} $\F\subseteq \R$. 
	On the one hand, we will show that $\{0,1\}$ is a resolving set for $\F$. 
	Let $x,y\in \F$ satisfy $|x|=|y|$ and $|x-1|=|y-1|$.
	Since $x,y\in \R$, from $|x|=|y|$ we get either $x=y$ or $x=-y$.
	In the latter case,
		\[
		|x-1|=|-y-1|=|y+1|,
		\]
	so $|y+1|=|y-1|$, which implies $y=0$, and hence again $x=y$. 
	Therefore $\{0,1\}$ resolves $\F$, and so $\md(\F)\le 2$.
	
	On the other hand, no one-point set resolves $\F$. 
	Indeed, for every $a\in \F$, the points $a+1$ and $a-1$ belong to $\F$, are distinct, and satisfy
		\[
		|(a+1)-a|=|(a-1)-a|=1.
		\]
	Hence, $\md(\F)>1$, and therefore $\md(\F)=2$.
	
	\medskip
	
	\textit{Case 2:} $\F\nsubseteq \R$. 
	Choose $\alpha\in \F\setminus \R$. 
	We will first show that $\{0,1,\alpha\}$ is a resolving set for $\F$, and hence that $\md(\F)\le 3$.
	
	Let $x,y\in \F$ satisfy $|x|=|y|$, $|x-1|=|y-1|$ and $|x-\alpha|=|y-\alpha|$.
	From the first two equalities we obtain
		\[
		|x|^2-|x-1|^2=|y|^2-|y-1|^2.
		\]
	Since
		\[
		|x-1|^2=|x|^2-2\Re(x)+1
		\]
	and similarly for $y$, it follows from $|x-1|=|y-1|$ that
		\[
		\Re(x)=\Re(y).
		\]
	Together with $|x|=|y|$, this implies that either $y=x$ or $y=\overline x$.
	
	If $y=\overline x\neq x$, then
		\[
		|x-\alpha|=|\overline x-\alpha|.
		\]
	But the set of points in $\C$ equidistant from $x$ and $\overline x$ is exactly the real axis, so this would force $\alpha\in \R$, a contradiction. Therefore $y=x$, and $\{0,1,\alpha\}$ resolves $\F$. Thus $\md(\F)\le 3$.
	
	\medskip
	
	We will now prove the equivalence.
	Assume first that there is no $z\in \F\setminus \R$ such that $\overline z\in \F$. We claim that $\{0,1\}$ resolves $\F$. Let $x,y\in \F$ satisfy $|x|=|y|$ and $|x-1|=|y-1|$.
	As above, these two equalities imply that either $y=x$ or $y=\overline x$. 
	If $y=\overline x$ and $x\notin \R$, then $\overline x\in \F$, contradicting the assumption. 
	If $x\in \R$, then $\overline x=x$ anyway. Hence $y=x$ in all cases, so $\{0,1\}$ resolves $\F$. 
	Therefore $\md(\F)\le 2$. 
	Since no one-point set resolves $\F$, we conclude that $\md(\F)=2$.
	
	Conversely, assume that there exists $u\in \F\setminus \R$ such that $\overline u\in \F$. 
	We will show that no two-point subset of $\F$ resolves $\F$. 
	Let $p,q\in \F$ with $p\neq q$, and define
		\[
		x:=p+u(q-p), \qquad y:=p+\overline u\,(q-p).
		\]
	Then, $x,y\in \F$, and since $u\notin \R$ we have $u\neq \overline u$, so $x\neq y$. 
	Moreover,
		\[
		|x-p|=|u|\,|q-p|=|\overline u|\,|q-p|=|y-p|.
		\]
	Also,
		\[
		x-q=(u-1)(q-p), \qquad y-q=(\overline u-1)(q-p),
		\]
	so
		\[
		|x-q|=|u-1|\,|q-p|=|\overline u-1|\,|q-p|=|y-q|.
		\]
	Thus, $\{p,q\}$ does not resolve $\F$. 
	Since $p,q$ were arbitrary, no two-point set resolves $\F$, so $\md(\F)> 2$. 
	Together with $\md(\F)\le 3$, this yields $\md(\F)=3$.
	This completes the proof.
\end{proof}

Let us provide two archimedean valued fields not contained in the real line, one with metric dimension $2$ and the other with metric dimension $3$.

\begin{example}
	\noindent 
	\begin{itemize}
		\item The case $\md(\F)=3$. 
		Let $\F=\Q(i)$. Then $\F\nsubseteq \R$, and complex conjugation preserves $\F$, since for every $a,b\in \Q$ we have $\overline{a+bi}=a-bi\in \Q(i)$.
		In particular, there exists a non-real element $z\in \F$ such that $\overline z\in \F$.
		Hence, by Proposition~\ref{thm:main}, we have $\md(\Q(i))=3$.
		
		\item The case $\md(\F)=2$. 
		Let $\alpha$ be a non-real root of the polynomial
		$$
		p(x)=x^3-x+1,
		$$
		and set $\F=\Q(\alpha)$. 
		Then, $\F\nsubseteq \R$. 
		Moreover, $p$ is irreducible over $\Q$ by the rational root test, and its discriminant is $\Delta(p)=-23$, which is not a square in $\Q$. 
		Thus, the cubic extension $\Q(\alpha)/\Q$ is not Galois.
		
		We claim that there is no non-real element $z\in \F$ such that $\overline z\in \F$.
		Indeed, assume that such a $z$ existed.
		Since $[\F:\Q]=3$ is prime and $z\notin \Q$, we would have $\F=\Q(z)$.
		As $\overline z\in \F$ and $\overline z\notin \Q$, we would also have $\F=\Q(\overline z)$.
		Therefore, the assignment $z\mapsto \overline z$ would induce a nontrivial $\Q$-automorphism of $\F$.
		Since $[\F:\Q]=3$ is prime, the existence of such a nontrivial automorphism would force $\F/\Q$ to be Galois, a contradiction.
		
		Hence, no non-real element of $\F$ has its complex conjugate in $\F$.
		By Proposition~\ref{thm:main}, we conclude that $\md(\Q(\alpha))=2$.
	\end{itemize}
\end{example}

\begin{remark}
    The metric dimension depends on the metric, not just the topology. 
    For instance, $\R$ with the bounded metric $d(x,y)=\min\{|x-y|,1\}$ is homeomorphic to the Euclidean real line, but its metric dimension is infinite because no finite set can resolve points that are sufficiently far apart.
\end{remark}

The \emph{density character} of a topological space $X$, denoted $\dens(X)$, is the smallest cardinality of a dense subset of $X$.

\medskip

It is clear that given $a,x,y$ in an ultrametric space $(X,d)$, if $d(x,y) < d(a,x)$, then $d(a,x) = d(a,y)$.
We will also use the fact that a field endowed with a nontrivial absolute value has no isolated points.
Indeed, if $0<|\tau|<1$, then $\tau^n\to 0$, and hence $x+\tau^n\to x$ with $x+\tau^n\neq x$.

\begin{proposition}\label{thm:finite-fails}
	Let $\F$ be an infinite field endowed with a nontrivial non-archimedean absolute value. 
	Then, no finite subset of $\F$ is resolving.
\end{proposition}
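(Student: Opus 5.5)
Take an arbitrary finite set $A=\{a_1,\dots,a_n\}\subseteq\F$ and produce two distinct points $x\neq y$ with $d(x,a_i)=d(y,a_i)$ for every $i$. The tool is the ultrametric observation recorded just before the statement: if $d(x,y)<d(a,x)$, then $d(a,x)=d(a,y)$. It therefore suffices to find a point $x\notin A$ and a second point $y\neq x$ that is closer to $x$ than every $a_i$ is.

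The steps, in order, are as follows. First, since $\F$ is infinite and $A$ is finite, choose $x\in\F\setminus A$. Then the number
\[
r:=\min_{1\le i\le n}|x-a_i|
\]
is strictly positive, as a minimum of finitely many positive numbers (if $A=\emptyset$ the statement is trivial, and we may take $r=\infty$). Second, use the fact, noted before the statement, that a field with a nontrivial absolute value has no isolated points. Concretely, fix $\tau$ with $0<|\tau|<1$; such $\tau$ exists because the value is nontrivial, and if only $|\tau|>1$ occurs we invert. Then pick $m$ with $|\tau|^m<r$ and set $y:=x+\tau^m$. Third, observe that $y\neq x$ and $|x-y|=|\tau|^m<r\le|x-a_i|$ for each $i$. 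The ultrametric observation then gives $|y-a_i|=|x-a_i|$ for every $i$, so $x$ and $y$ have the same distance vector to $A$, and $A$ is not resolving.

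No real obstacle is expected. The only points needing care are, first, guaranteeing that $x\notin A$, which is where the hypothesis that $\F$ is infinite is used (automatic anyway for nontrivial absolute values); and second, using strictness in $d(x,y)<d(a,x)$, which is why $x$ must avoid $A$ so that $r>0$.
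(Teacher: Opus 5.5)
Your proof is correct and follows the same mechanism as the paper's proof. You pick a point $x$ at positive distance from every $a\in A$, perturb it by less than that distance (possible because there are no isolated points), and apply the ultrametric isosceles property. The only difference is the choice of $x$: the paper takes $|x|>\max_{a\in A}|a|$, using that the value group is unbounded above, so that $|x-a|=|x|$ for all $a$. You instead take any $x\notin A$ with $r=\min_i|x-a_i|>0$, which is slightly more economical because it avoids that extra fact.
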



\begin{proof}
	Let $A\subseteq \F$ be finite.
	If $A=\varnothing$, then $A$ is clearly not resolving, since $\F$ contains at least two distinct points.
	Thus, we may assume that $A\neq\varnothing$.

	Let
		\[
		R:=\max\{|a-b|:a,b\in A\}
		\]
	be the diameter of $A$. 
	Set
		\[
		M:=\max\left(R,\max_{a\in A}|a|\right).
		\]
	Since the value group is unbounded above (see \cite[p.~4]{van}), we may choose $x\in \F$ such that $|x|>M$.
	Then, for every $a\in A$, we have $|x|>|a|$, and therefore the ultrametric inequality yields
		\[
		|x-a|=\max\{|x|,|a|\}=|x|>R,
		\]
	for every $a\in A$.
	
	Now set $\varepsilon:=|x|>0$.
	Since $\F$ has no isolated points, we can choose $y\in \F$ with $y\neq x$ and $|x-y|<\varepsilon$.
	For each $a\in A$, we then have $|x-y|<|x-a|$.
	Hence, again by the ultrametric inequality, $|y-a|=|x-a|$ for every $a\in A$, while $x\neq y$. 
	Therefore, $A$ does not resolve $\F$.
	
	Since $A$ was arbitrary, no finite subset of $\F$ is resolving.
\end{proof}

\begin{proposition}\label{thm:dense-works}
	Let $\F$ be a field endowed with a nontrivial non-archimedean absolute value. 
	Then, every dense subset $A\subseteq \F$ is resolving.
\end{proposition}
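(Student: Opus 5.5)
We argue by contraposition: take $x\neq y$ in $\F$ and produce some $a\in A$ with $|x-a|\neq|y-a|$. Set $r:=|x-y|>0$. By density of $A$ there is $a\in A$ with $|x-a|<r$. The ultrametric observation recorded before Proposition~\ref{thm:finite-fails} then settles the matter, applied with base point $y$. Indeed, since $|x-a|<r=|y-x|$, we get
\[
|y-a|=|y-x|=r>|x-a|.
\]
So $a$ separates $x$ and $y$, and hence $A$ is resolving.

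The steps, in order, are:
\begin{enumerate}
\item fix distinct $x,y$ and let $r=|x-y|$;
\item use density to choose $a\in A$ in the open ball $B(x,r)$, which is a nonempty open set because it contains $x$;
\item apply the strong triangle inequality in the form $|y-a|=\max\{|y-x|,|x-a|\}$, valid because the two terms are unequal, to conclude $|y-a|=r\neq|x-a|$.
\end{enumerate}
Nontriviality of the absolute value is not actually needed for this argument. If the absolute value is trivial, the only dense set is $\F$ itself, which is trivially resolving. Nontriviality is presumably assumed so that the result pairs with Proposition~\ref{thm:finite-fails} to yield $\md(\F)=\dens(\F)$.

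There is essentially no obstacle. The only point needing care is the strict inequality in the ultrametric "isosceles" principle: we must choose $a$ with $|x-a|$ strictly less than $r$, not merely at most $r$. Using the open ball handles this.
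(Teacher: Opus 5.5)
Your proof is correct and is essentially the paper's argument: by density you pick $a\in A$ in the open ball $B(x,|x-y|)$, and the ultrametric isosceles property then gives $|y-a|=|x-y|>|x-a|$. Two small remarks: the argument is direct rather than by contraposition, and your observation that nontriviality of the absolute value is not needed is also correct.
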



\begin{proof}
	Let $x,y\in \F$ with $x\neq y$, and set $t:=y-x\neq 0$.
	We shall find $a\in A$ such that $|x-a|\neq |y-a|$.
	
	Since $A$ is dense in $\F$, and $B(x,|t|)=\{z\in \F:|z-x|<|t|\}$ is an open neighbourhood of $x$, there exists $a\in A$ such that $|x-a|<|t|$.
	Now $y-a=(x-a)+t$.
	Since $|x-a|<|t|$, the ultrametric inequality implies
		\[
		|y-a|=|(x-a)+t|=|t|.
		\]
	Therefore,
		\[
		|x-a|<|t|=|y-a|,
		\]
	so in particular $|x-a|\neq |y-a|$.
	
	Hence, $A$ separates the points $x$ and $y$. Since $x\neq y$ were arbitrary, $A$ is a resolving set for $\F$.
\end{proof}

\begin{theorem}\label{thm:main-nonarch}
	Let $\F$ be an infinite field endowed with a nontrivial non-archimedean absolute value. Then,
	\[
	\md(\F)=\dens(\F).
	\]
	Moreover, the same equality holds for the trivial absolute value, in which case
	\[
	\md(\F)=\dens(\F)=\operatorname{card}(\F).
	\]
\end{theorem}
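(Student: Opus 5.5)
The upper bound is immediate. By Proposition~\ref{thm:dense-works}, a dense subset $A\subseteq\F$ with $\operatorname{card}(A)=\dens(\F)$ is resolving, so $\md(\F)\le\dens(\F)$. For the lower bound, fix a resolving set $A$; we must show $\operatorname{card}(A)\ge\dens(\F)$. By Proposition~\ref{thm:finite-fails}, $A$ is infinite. The natural plan is to produce from $A$ a dense set of cardinality $\operatorname{card}(A)$. The key tool is the ultrametric observation preceding Proposition~\ref{thm:finite-fails}: if $|x-y|<|x-a|$ then $|y-a|=|x-a|$.

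First I will show that $A$ itself is dense. Suppose it is not. Then there are $x\in\F$ and $r>0$ with $|x-a|\ge r$ for all $a\in A$. Because $\F$ has no isolated points, I can pick $y\ne x$ with $|x-y|<r$. Then $|x-y|<r\le|x-a|$ for every $a\in A$, so the observation gives $|y-a|=|x-a|$ for all $a\in A$. This contradicts that $A$ is resolving. Hence every resolving set is dense, so $\operatorname{card}(A)\ge\dens(\F)$, and $\md(\F)=\dens(\F)$. This argument in fact makes Proposition~\ref{thm:finite-fails} unnecessary for this step, though it remains consistent with it. I expect this step to be the main point, but it is short. The only subtlety is using the ball of radius strictly less than the distance to $A$, so that the strict inequality needed by the observation holds.

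For the trivial absolute value, the metric is discrete: $d(x,y)=1$ for $x\ne y$. The only dense subset is $\F$ itself, so $\dens(\F)=\operatorname{card}(\F)$. For the metric dimension, note that if $x,y\notin A$ are distinct, then $d(x,a)=d(y,a)=1$ for all $a\in A$. So a resolving set must miss at most one point of $\F$. Since $\F$ is infinite, removing one point does not change the cardinality, so $\md(\F)=\operatorname{card}(\F)$. Conversely, $\F$ minus one point does resolve: the omitted point is the only one at distance $1$ from every element of $A$, while each $a\in A$ is distinguished by $d(a,a)=0$. This gives $\md(\F)=\dens(\F)=\operatorname{card}(\F)$.
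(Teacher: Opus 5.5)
Your proof is correct and follows the paper's argument essentially step for step. You get the upper bound from Proposition~\ref{thm:dense-works}, get the lower bound by showing every resolving set is dense via the ultrametric observation, and use the same ``complement has at most one point'' argument in the trivial case. Your remark that Proposition~\ref{thm:finite-fails} is redundant here is also right, and comparing $x$ with a nearby $y\neq x$ (rather than two points $y,z$ of the ball, as the paper does) is an immaterial variation.
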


%
%

\begin{proof}
	Assume first that $|\cdot|$ is nontrivial and non-archimedean.
	
	By Proposition~\ref{thm:finite-fails}, no finite subset of $\F$ is resolving. In particular, $\md(\F)$ is infinite. On the other hand, by Proposition~\ref{thm:dense-works}, every dense subset of $\F$ is resolving. Therefore,
		\[
		\md(\F)\le \dens(\F).
		\]
	
	For the reverse inequality, let $A\subseteq \F$ be a resolving set. We claim that $A$ must be dense. 
	Assume, on the contrary, that $A$ is not dense. 
	Then, there exist $x\in \F$ and $r>0$ such that $B(x,r)\cap A=\varnothing$.
	Choose two distinct points $y,z\in B(x,r)$. Since $a\notin B(x,r)$ for every $a\in A$, we have
		\[
		|x-a|\ge r>|x-y|
		\qquad\text{and}\qquad
		|x-a|\ge r>|x-z|.
		\]
	By the ultrametric inequality, it follows that
		\[
		|y-a|=|(y-x)+(x-a)|=|x-a|
		\]
	and similarly
		\[
		|z-a|=|(z-x)+(x-a)|=|x-a|.
		\]
	Hence,
		\[
		|y-a|=|z-a|,
		\]
	for every $a\in A$,
	which contradicts the fact that $A$ resolves $\F$. 
	Thus, every resolving set is dense, and so
		\[
		\dens(\F)\le \md(\F).
		\]
	
	Combining both inequalities, we conclude that
		\[
		\md(\F)=\dens(\F).
		\]
	
	Finally, assume that $|\cdot|$ is the trivial absolute value. 
	Then the induced topology on $\F$ is discrete, so
		\[
		\dens(\F)=\operatorname{card}(\F).
		\]
	We claim that a subset $A\subseteq \F$ is resolving if and only if $\operatorname{card}(\F\setminus A)\le 1$.
	
	Indeed, if there exist distinct $y,z\in \F\setminus A$, then for every $a\in A$ we have
		\[
		|y-a|=|z-a|=1,
		\]
	so $A$ does not resolve $\F$. Conversely, if $\operatorname{card}(\F\setminus A)\le 1$, then for any distinct $x,y\in \F$, at least one of them belongs to $A$, say $x\in A$. 
	Then,
		\[
		|x-x|=0\neq |y-x|,
		\]
	so $A$ resolves $\F$.
	
	Hence, every resolving set has complement of cardinality at most one, and since $\F$ is infinite, any such set has cardinality $\operatorname{card}(\F)$. 
	Therefore,
		\[
		\md(\F)=\operatorname{card}(\F)=\dens(\F).
		\]
	This concludes the proof.
\end{proof}

\begin{example}
    By Theorem~\ref{thm:main-nonarch}, $\md(\mathbb{Q}_p) = \aleph_0$ and $\md(\mathbb{F}_q((t))) = \aleph_0$ since both are separable. 
    As  $\mathbb{C}_{p}$
    is a separable metric space \cite[p. 140]{rob}, its density character is $\aleph_{0}$. 
    By Theorem~\ref{thm:main-nonarch} we therefore obtain $\md(\mathbb{C}_{p})=\aleph_{0}$.  
    Here $\mathbb{Q}_p$, $\mathbb{C}_p$ and $\mathbb{F}_q((t))$ denote the fields of $p$-adic numbers, $p$-adic complex numbers, and the field of formal Laurent series over the finite field $\mathbb{F}_q$, respectively.
\end{example}

\section{Uniform Openness of Multiplication}

We now turn to the problem of uniform openness of multiplication in a valued field.
Throughout this section, products of metric spaces will be endowed with the maximum metric; that is, if $(X,d_X)$ and $(Y,d_Y)$ are metric spaces, we equip $X\times Y$ with the metric
	$$
	d((x,y),(s,t))=\max\{d_X(x,s),d_Y(y,t)\}.
	$$

\begin{lemma}\label{met}
	Let $(\F_1, |\cdot|_1)$ and $(\F_2, |\cdot|_2)$ be valued fields, and let $\phi : \F_1 \to \F_2$ be an isomorphism of topological fields. 
	If multiplication $m_1:\F_1\times \F_1\to \F_1$
	is uniformly open, then multiplication $m_2:\F_2\times \F_2\to \F_2$ is uniformly open.
\end{lemma}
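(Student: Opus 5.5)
The plan is to pull the absolute value of $\F_2$ back to $\F_1$ along $\phi$, show that it is a power of $|\cdot|_1$, and then observe that $\phi$ maps balls to balls with radii changed by a fixed power. Uniform openness is plainly preserved under such a rescaling.

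First define $\|x\| := |\phi(x)|_2$ for $x\in \F_1$. Since $\phi$ is a field isomorphism, $\|\cdot\|$ is an absolute value on $\F_1$. Since $\phi$ is a homeomorphism, $\|\cdot\|$ induces on $\F_1$ the same topology as $|\cdot|_1$. Next invoke the classical fact that two absolute values on a field inducing the same topology are equivalent (see, e.g., Neukirch, \emph{Algebraic Number Theory}, Prop.~II.3.3). This gives a constant $c>0$ with $|\phi(x)|_2 = |x|_1^{c}$ for all $x\in\F_1$.

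The trivial case must be handled separately. If $|\cdot|_1$ is trivial, the common topology is discrete, so $\|\cdot\|$ is trivial as well, and any $c>0$ works. This equivalence step is the only genuinely nontrivial ingredient, and I would cite it rather than reprove it. If a proof were required, the standard route is: $|x|_1<1$ iff $x^n\to 0$ iff $\|x\|<1$, followed by the usual logarithm comparison argument.

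With $c$ in hand, for every $x\in\F_1$ and $r>0$ we get $\phi(B_1(x,r)) = B_2(\phi(x), r^{c})$, using that $\phi$ is bijective. Because products carry the maximum metric, the same holds for $\phi\times\phi$: it maps $B_{\F_1\times\F_1}((x,y),r)$ onto $B_{\F_2\times\F_2}((\phi(x),\phi(y)),r^{c})$. Moreover $m_2\circ(\phi\times\phi) = \phi\circ m_1$.

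Now fix $\varepsilon>0$, set $\varepsilon_1=\varepsilon^{1/c}$, let $\delta_1$ be given by the uniform openness of $m_1$ for $\varepsilon_1$, and put $\delta=\delta_1^{c}$. Any point of $\F_2\times\F_2$ has the form $(\phi(x),\phi(y))$, by surjectivity. For such a point,
\[
B_2(\phi(x)\phi(y),\delta)=\phi\bigl(B_1(xy,\delta_1)\bigr)\subseteq \phi\bigl(m_1(B((x,y),\varepsilon_1))\bigr)=m_2\bigl(B((\phi(x),\phi(y)),\varepsilon)\bigr).
\]
Since $\delta$ does not depend on the point, this proves that $m_2$ is uniformly open.
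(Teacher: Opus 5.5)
Your proof is correct, but it follows a different route from the paper. You first make the situation rigid. You pull $|\cdot|_2$ back along $\phi$ and invoke the classical theorem that topologically equivalent nontrivial absolute values are powers of one another; your separate handling of the trivial case, via ``discrete topology implies trivial absolute value,'' is right. This gives $|\phi(x)|_2=|x|_1^{c}$, so $\phi$ maps balls exactly onto balls, $\phi(B_1(x,r))=B_2(\phi(x),r^{c})$, and the choice $\delta=\delta_1(\varepsilon^{1/c})^{c}$ is explicit.

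The paper never uses the equivalence theorem. It uses only three facts: $\phi$ and $\phi^{-1}$ are additive, both are continuous at $0$, and absolute-value metrics are translation invariant, so continuity at $0$ already gives uniform control everywhere. Concretely, it picks $\varepsilon_1$ with $\phi(B_1(0,\varepsilon_1))\subseteq B_2(0,\varepsilon)$. It then takes $\delta_1$ from the uniform openness of $m_1$, and $\delta_2$ with $\phi^{-1}(B_2(0,\delta_2))\subseteq B_1(0,\delta_1)$. Finally it verifies $B_2(pq,\delta_2)\subseteq m_2\bigl(B_2(p,\varepsilon)\times B_2(q,\varepsilon)\bigr)$ by translating back and forth.

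The paper's argument is therefore more elementary and more general. It works verbatim for any topological-ring isomorphism between rings with translation-invariant metrics, where no rigidity result like the one you cite is available. Your argument gives exact control of balls instead. It also folds into the lemma the separate observation, used in the archimedean case of Theorem~\ref{thm:mult-open}, that replacing the metric $d$ by $d^{c}$ does not affect uniform openness.
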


\begin{proof}
	Since $\phi$ is a topological field isomorphism, both $\phi$ and $\phi^{-1}$ are continuous at $0$. Moreover,
		\[
		m_2=\phi\circ m_1\circ (\phi\times \phi)^{-1}.
		\]
	
	Fix $\varepsilon>0$. By continuity of $\phi$ at $0$, there exists $\varepsilon_1>0$ such that $\phi(B_1(0,\varepsilon_1))\subseteq B_2(0,\varepsilon)$.
	Since $m_1$ is uniformly open, there exists $\delta_1>0$ such that for all $a,b\in \F_1$, we have $B_1(m_1(a,b),\delta_1)\subseteq m_1\left(B_1(a,\varepsilon_1)\times B_1(b,\varepsilon_1)\right)$.
	By continuity of $\phi^{-1}$ at $0$, there exists $\delta_2>0$ such that $\phi^{-1}(B_2(0,\delta_2))\subseteq B_1(0,\delta_1)$.
	
	We claim that $\delta_2$ works for $\varepsilon$. 
	Let $p,q\in \F_2$, and write 
		$$
		a:=\phi^{-1}(p)\qquad \text{and} \qquad b:=\phi^{-1}(q).
		$$
	Take any 
		$$
		z\in B_2(m_2(p,q),\delta_2).
		$$
	Then, $z-\phi(m_1(a,b))\in B_2(0,\delta_2)$, and so $\phi^{-1}(z)-m_1(a,b)\in B_1(0,\delta_1)$,
	that is, $\phi^{-1}(z)\in B_1(m_1(a,b),\delta_1)$.
	Hence, $\phi^{-1}(z)\in m_1\left(B_1(a,\varepsilon_1)\times B_1(b,\varepsilon_1)\right)$.
	Therefore, there exist $u\in B_1(a,\varepsilon_1)$ and $v\in B_1(b,\varepsilon_1)$ such that $\phi^{-1}(z)=m_1(u,v)$.
	Applying $\phi$, we obtain $z=m_2(\phi(u),\phi(v))$.
	Now $u-a\in B_1(0,\varepsilon_1)$, so $\phi(u)-p=\phi(u-a)\in B_2(0,\varepsilon)$, and thus $\phi(u)\in B_2(p,\varepsilon)$.
	Similarly, $\phi(v)\in B_2(q,\varepsilon)$.
	It follows that
		\[
		z\in m_2\left(B_2(p,\varepsilon)\times B_2(q,\varepsilon)\right).
		\]
	
	Thus,
		\[
		B_2(m_2(p,q),\delta_2)\subseteq m_2\left(B_2(p,\varepsilon)\times B_2(q,\varepsilon)\right)
		\]
	for all $p,q\in \F_2$, and hence $m_2$ is uniformly open.
\end{proof}

\begin{theorem}\label{thm:mult-open}
	If $\F$ is a nontrivial valued field, then the multiplication map $m:\F\times\F\to\F$ given by $m(x,y)=xy$ is uniformly open.
\end{theorem}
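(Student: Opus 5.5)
Fix $\varepsilon>0$ and points $a,b\in\F$. We need a $\delta>0$, independent of $a,b$, such that every $z$ with $|z-ab|<\delta$ can be written as $z=uv$ with $|u-a|<\varepsilon$ and $|v-b|<\varepsilon$. Lemma~\ref{met} is not needed here, since we work with the given absolute value directly. The idea is to perturb only one factor, choosing which factor according to the sizes of $|a|$ and $|b|$. Since the absolute value is nontrivial, the field is not discrete, and we will take $\delta:=\varepsilon^2/4$ or some similar expression; the exact constant will be tuned during the case analysis.

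\emph{Case 1: one factor is large.} Suppose, say, $|a|\ge \varepsilon/2$. Put $u:=a$ and $v:=z/a$. Then
\[
|v-b|=\frac{|z-ab|}{|a|}<\frac{\delta}{\varepsilon/2}\le \varepsilon
\]
as soon as $\delta\le \varepsilon^2/2$. The case $|b|\ge\varepsilon/2$ is symmetric.

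\emph{Case 2: both factors are small,} that is, $|a|,|b|<\varepsilon/2$. Then $|ab|<\varepsilon^2/4$, so $|z|<\varepsilon^2/4+\delta$. Here I will not perturb $a$ and $b$ slightly; I will replace both by new points near $0$. The natural attempt is a factorization $z=uv$ with $|u|,|v|<\varepsilon/2$, which gives $|u-a|\le |u|+|a|<\varepsilon$ and likewise $|v-b|<\varepsilon$. If $z=0$, take $u=v=0$. If $z\neq0$, we need an element $u$ with $|z|/(\varepsilon/2)<|u|<\varepsilon/2$, and then set $v=z/u$. Such a $u$ exists provided the open interval $\bigl(2|z|/\varepsilon,\varepsilon/2\bigr)$ contains a value $|u|$ with $u\in\F$.

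The main obstacle is this last step. The value set $|\F^\times|$ may be discrete, for instance $\{p^n\}$ in $\Q_p$, so an interval $(s,t)$ contains a value only if $t/s$ exceeds some bound. To handle this, fix $\tau\in\F$ with $0<|\tau|<1$; this uses nontriviality. Let $\lambda:=|\tau|^{-1}>1$. Every interval $(s,t)$ with $t/s>\lambda$ contains some $|\tau^n|$, $n\in\mathbb{Z}$. I will therefore shrink $\delta$ so that $|z|<\varepsilon^2/(4\lambda)$, which makes the ratio $\frac{\varepsilon/2}{2|z|/\varepsilon}$ exceed $\lambda$. With the thresholds adjusted accordingly, the argument should go through. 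The case split becomes: Case 1 when $|a|\ge\eta$ or $|b|\ge\eta$, and Case 2 otherwise, where $\eta:=\varepsilon/(4\lambda)$. The final choice is $\delta:=\min\{\eta\varepsilon,\ \varepsilon^2/(8\lambda)\}$.

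Check for Case 2: $|ab|<\eta^2\le \varepsilon^2/(16\lambda^2)$, so $|z|<\varepsilon^2/(4\lambda)$. Hence a power $u=\tau^n$ with $2|z|/\varepsilon<|u|<\varepsilon/2$ exists, and $v=z/u$ completes the factorization. The only remaining work is verifying these inequalities.
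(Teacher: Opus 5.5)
Your proof is correct, and it takes a genuinely different route from the paper's.

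The paper handles the two kinds of absolute value separately. In the non-archimedean case with $|a|,|b|<|\tau|$, it replaces $a$ by $a+\tau$. The ultrametric equality $|a+\tau|=|\tau|$ then bounds this factor from below, and the paper solves for the second factor. In the archimedean case it first uses Ostrowski's theorem and Lemma~\ref{met} to reduce to a subfield of $\C$ with the usual absolute value. It then needs two candidate perturbations $r$ and $r\beta$ (with $\beta\neq 1$), because $a+r$ alone could be close to $0$.

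In the case where both factors are small, you discard $a$ and $b$ altogether and factor $z$ afresh as $\tau^n\cdot(z/\tau^n)$. The only fact this needs is that $|\tau|^{\mathbb Z}$ meets every interval $(s,t)$ with $t/s>|\tau|^{-1}$. Your inequalities do check out. With $\eta=\varepsilon/(4\lambda)$ and $\delta=\min\{\eta\varepsilon,\varepsilon^2/(8\lambda)\}$ one gets $|z|<\varepsilon^2/(16\lambda)+\varepsilon^2/(8\lambda)<\varepsilon^2/(4\lambda)$, so the required power of $\tau$ exists. Then $|u-a|$ and $|v-b|$ are both less than $\varepsilon/2+\eta<\varepsilon$.

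What your route buys is a single argument for both archimedean and non-archimedean absolute values. It uses only multiplicativity, the ordinary triangle inequality, and an element with $0<|\tau|<1$. It needs neither the ultrametric inequality nor Ostrowski's theorem nor Lemma~\ref{met}. The paper's route instead keeps the modified first factor a perturbation of $a$ by a fixed element, independent of $z$. For that it pays with a case split and with structural information about archimedean fields.
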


\begin{proof}
	Fix $\varepsilon>0$. 
	We will construct $\delta>0$ such that for all $a,b\in\F$,
		\[
		B(ab,\delta)\subseteq m\bigl(B(a,\varepsilon)\times B(b,\varepsilon)\bigr).
		\]
	Equivalently, whenever $w\in\F$ satisfies $|w-ab|<\delta$, we will find $u,v\in\F$ such that $|u|,|v|<\varepsilon$ and $(a+u)(b+v)=w$.
	Set $\eta:=w-ab$.
	
	We consider separately the non-archimedean and archimedean cases.
	
	\bigskip
	
	\noindent\textit{Non-archimedean case.}
	Since $\F$ is nondiscrete, there exists $\tau\in\F$ such that $0<|\tau|<\varepsilon$.
	Define $\delta:=|\tau|\,\varepsilon$.
	Let $a,b\in\F$ and assume that $|\eta|<\delta$.
	
	We distinguish three cases.
	
	\medskip
	
	\noindent\textbf{Case 1:} $|a|\ge |\tau|$ (analogously for $|b|\geq |\tau|$).
	
	Set
		\[
		u:=0 \qquad \text{and} \qquad v:=\frac{\eta}{a}.
		\]
	Then,
		\[
		(a+u)(b+v)=a\left(b+\frac{\eta}{a}\right)=ab+\eta=w,
		\]
	and
		\[
		|v|=\frac{|\eta|}{|a|}<\frac{\delta}{|\tau|}=\varepsilon.
		\]
	Thus, $a+u\in B(a,\varepsilon)$ and $b+v\in B(b,\varepsilon)$.
	
	\medskip
	
	\noindent\textbf{Case 2:} $|a|<|\tau|$ and $|b|<|\tau|$.
	
	Set
		\[
		u:=\tau \qquad \text{and} \qquad v:=\frac{\eta-b\tau}{a+\tau}.
		\]
	Since $|a|<|\tau|$, the ultrametric inequality gives
		\[
		|a+\tau|=|\tau|\neq 0,
		\]
	so $v$ is well defined. Moreover,
		\[
		(a+u)(b+v)=(a+\tau)\left(b+\frac{\eta-b\tau}{a+\tau}\right)=ab+b\tau+\eta-b\tau=ab+\eta=w.
		\]
	Also, $|\eta-b\tau|\le \max\{|\eta|,\ |b\tau|\}$.
	Now, $|\eta|<\delta=|\tau|\varepsilon$ and 
		$$
		|b\tau|=|b|\,|\tau|<|\tau|^2<|\tau|\,\varepsilon
		$$
	since $|\tau|<\varepsilon$. 
	Hence,
		\[
		|\eta-b\tau|<|\tau|\,\varepsilon,
		\]
	and therefore,
	\[
	|v|=\frac{|\eta-b\tau|}{|a+\tau|}=\frac{|\eta-b\tau|}{|\tau|}<\varepsilon.
	\]
	Since $|u|=|\tau|<\varepsilon$, we again obtain $a+u\in B(a,\varepsilon)$, $b+v\in B(b,\varepsilon)$ and $(a+u)(b+v)=w$, concluding this case.
	
	Thus multiplication is uniformly open in the non-archimedean case.
	
	\medskip
	
	\noindent\textit{Archimedean case.}
	By Ostrowski's theorem, there exist $c>0$ and a field embedding $\sigma:\F\hookrightarrow \C$ such that $|x|=|\sigma(x)|_\infty^{\,c}$ for all $x\in \F$, where $|\cdot|_\infty$ denotes the usual absolute value on $\C$.
	
	Let $d(z,t):=|z-t|_\infty$ and $d_c(z,t):=d(z,t)^c$. Since
		\[
		B_{d_c}(z,r)=B_d\bigl(z,r^{1/c}\bigr),
		\]
	multiplication is uniformly open with respect to $d_c$ if and only if it is uniformly open with respect to $d$. Therefore, by Lemma~\ref{met}, it is enough to prove the theorem when $\F$ is regarded as a subfield of $\C$ endowed with the usual absolute value. We therefore assume from now on that $\F\subseteq\C$ and that $|\cdot|$ is the usual absolute value.
	
	If $\F\subseteq\R$, set $\beta:=-1$.
	If $\F\nsubseteq\R$, choose any $\beta\in\F\setminus\R$.
	In either case, $\beta\in\F$ and $\beta\neq 1$. 
	Define
		\[
		M:=\max\{1,|\beta|\} \qquad \text{and} \qquad \kappa:=\frac{|1-\beta|}{2}>0.
		\]
	Choose a rational number $r>0$ such that
		\[
		Mr<\frac{\varepsilon}{2},
		\]
	and set
	\[
	\delta:=\frac{\kappa r\varepsilon}{2}.
	\]
	
	Let $a,b\in\F$ and assume that $|\eta|<\delta$.
	
	We again split into cases.
	
	\medskip
	
	\noindent\textbf{Case 1:} $|a|\ge \kappa r$ (analogously for $|b|\geq \kappa r$).
	
	Set
		\[
		u:=0,\qquad v:=\frac{\eta}{a}.
		\]
	Then
		\[
		(a+u)(b+v)=a\left(b+\frac{\eta}{a}\right)=ab+\eta=w,
		\]
	and
		\[
		|v|=\frac{|\eta|}{|a|} < \frac{\delta}{\kappa r} = \frac{\varepsilon}{2} < \varepsilon.
		\]

	\medskip
	
	\noindent\textbf{Case 2:} $|a|<\kappa r$ and $|b|<\kappa r$.
	
	Consider 
		\[
		u_1:=r \qquad \text{and} \qquad u_2:=r\beta.
		\]
	We claim that at least one of them satisfies
		\[
		|a+u_j|\ge \kappa r.
		\]
	Indeed, if both
		\[
		|a+r|<\kappa r \qquad\text{and}\qquad |a+r\beta|<\kappa r
		\]
	held, then the triangle inequality would give
		\[
		r|1-\beta|=|r-r\beta|=|(a+r)-(a+r\beta)|\le |a+r|+|a+r\beta|<2\kappa r=r|1-\beta|,
		\]
	a contradiction.
	
	Choose $u\in\{r,r\beta\}$ such that $|a+u|\ge \kappa r$. 
	Since $|u|\le Mr<\varepsilon/2<\varepsilon$, we have $a+u\in B(a,\varepsilon)$. 
	Define
		\[
		v:=\frac{\eta-bu}{a+u}.
		\]
	Then,
		\[
		(a+u)(b+v)=(a+u)\left(b+\frac{\eta-bu}{a+u}\right)=ab+bu+\eta-bu=ab+\eta=w.
		\]
	Moreover,
		\[
		|v|\le \frac{|\eta|+|b|\,|u|}{|a+u|}< \frac{\delta+\kappa r\cdot Mr}{\kappa r}= \frac{\varepsilon}{2}+Mr<\varepsilon.
		\]
	Hence, $b+v\in B(b,\varepsilon)$.
	This proves uniform openness in the archimedean case as well.
\end{proof}

\begin{remark}
	\noindent
\begin{itemize}
\item[(1)] It is clear that if $\F$ is endowed with the trivial absolute value, then multiplication is uniformly open.
Indeed, for every $\varepsilon>0$, one may take $\delta=1$.

\item[(2)] Theorem~\ref{thm:mult-open} also applies, with the appropriate absolute-value metric, to every nondiscrete locally compact topological field $\F$.
Indeed, by the Pontryagin--Kowalsky--van Dantzig theorem (see \cite[Theorem~6.2.3]{sil}), every nondiscrete locally compact topological field is isomorphic, as a topological field, to $\R$, $\C$, a finite extension of $\Q_p$, or a field of formal Laurent series $\F_q((t))$.
In each of these cases, the topology is induced by a nontrivial absolute value $|\cdot|$ which makes the field complete.
Thus, after transporting this absolute-value metric through the topological-field isomorphism, Theorem~\ref{thm:mult-open} yields the uniform openness of multiplication.

\item[(3)] Since uniform openness depends on the chosen compatible metric and not only on the underlying topology, Theorem~\ref{thm:mult-open} cannot be reformulated purely in topological terms. 
In fact, even the usual field topology on $\R$ admits a compatible metric for which multiplication is open but not uniformly open as the following proposition shows.
\end{itemize}
\end{remark}

\begin{proposition}
	There exists a metric $d$ on $\R$, compatible with the usual topology, such that multiplication $m:\R\times\R\to\R$ given by $m(x,y)=xy$ is open but not uniformly open.
\end{proposition}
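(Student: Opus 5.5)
The plan is to transport the usual metric of $\R$ through a carefully chosen homeomorphism $g:\R\to\R$. We set $d(x,y):=|g(x)-g(y)|$, where $g$ is a strictly increasing odd bijection, piecewise linear on $[0,\infty)$. Any such $d$ is a metric compatible with the usual topology, since $g$ is a homeomorphism.

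Openness of $m$ will follow at no cost. By Theorem~\ref{thm:mult-open}, $m$ is uniformly open, hence open, for the usual metric. Openness of a map depends only on the topologies involved, and $d$ induces the usual topology on $\R$ and on $\R\times\R$. So the whole task is to destroy \emph{uniform} openness.

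Lemma~\ref{met} warns that $g$ must not be multiplicative. For instance, a power map $g(x)=x^{1/3}$ turns multiplication into ordinary multiplication in the new coordinates and gives nothing. So the idea is to make $g$ \emph{compress} a large neighbourhood of certain squares $t_n^2$, while keeping slope $1$ near $t_n$. Then a $d$-ball of fixed radius $\delta$ about $t_n^2$ is a huge usual interval. However, the $d$-balls of radius $\varepsilon$ about $t_n$ are ordinary intervals of length $2\varepsilon$, and the products of points in them fill only an interval of usual length about $4t_n\varepsilon$.

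Concretely, I would pick a rapidly increasing sequence, say $t_1=4$ and $t_{n+1}=2t_n^2$, and set $I_n:=[t_n^2/2,\,3t_n^2/2]$. These intervals are pairwise disjoint, and $I_n$ lies strictly between $t_n$ and $t_{n+1}$ together with its $1$-neighbourhood. Let $g$ have slope $1/t_n^2$ on $I_n$ and slope $1$ on $[0,\infty)\setminus\bigcup_n I_n$, with $g(0)=0$, and extend $g$ oddly. Then $g$ is continuous, strictly increasing and unbounded in both directions, so it is a homeomorphism of $\R$.

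To refute uniform openness, fix $\varepsilon\in(0,1)$ and suppose some $\delta>0$ works. Since $g$ has slope $1$ near $t_n$, we have $B_d(t_n,\varepsilon)=(t_n-\varepsilon,t_n+\varepsilon)$. Hence
\[
m\bigl(B_d(t_n,\varepsilon)\times B_d(t_n,\varepsilon)\bigr)=\bigl((t_n-\varepsilon)^2,(t_n+\varepsilon)^2\bigr).
\]
This interval has usual length $4t_n\varepsilon$ and lies inside $I_n$ for large $n$. Applying $g$, its image has length $4t_n\varepsilon/t_n^2=4\varepsilon/t_n$. On the other hand, $B_d(t_n^2,\delta)$ corresponds under $g$ to $(g(t_n^2)-\delta,\,g(t_n^2)+\delta)$, which has length $2\delta$. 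For $4\varepsilon/t_n<2\delta$ the required inclusion $B_d(t_n^2,\delta)\subseteq m(\cdots)$ fails. Since $t_n\to\infty$, no $\delta$ works.

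The only delicate step is this last one. One must check that the product interval genuinely sits inside the compressed region $I_n$, so that its $g$-length is computed with slope $1/t_n^2$; this uses $(t_n+\varepsilon)^2<3t_n^2/2$ and $(t_n-\varepsilon)^2>t_n^2/2$. One must also handle the case where the target ball of $g$-radius $\delta$ sticks out of $g(I_n)$, which has $g$-length $1$. This causes no harm: the target then still contains a $g$-interval of length at least $\min\{2\delta,1\}$, and that already exceeds $4\varepsilon/t_n$ for large $n$.
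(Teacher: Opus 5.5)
Your proof is correct and follows essentially the paper's construction. Both pull back the Euclidean metric through an increasing piecewise-linear homeomorphism with slopes tuned along a rapidly growing sequence $t_n$, inherit openness from compatibility of the topologies, and refute uniform openness at $p=q=t_n$ by comparing the image of the product of $\varepsilon$-balls with the $\delta$-ball about $t_n^2$.

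The only differences are in the distortion. The paper both expands near $t_n$ (slope $t_n^4$ on $[t_n-1,t_n+1]$) and compresses near $t_n^2$ (slope $t_n^{-4}$ on a unit interval). You instead keep slope $1$ near $t_n$ and compress the wide interval $[t_n^2/2,3t_n^2/2]$ by slope $t_n^{-2}$. Your closing worry about the target ball sticking out of $g(I_n)$ is unnecessary: since $g$ is a bijection, $g(B_d(t_n^2,\delta))$ is an interval of length exactly $2\delta$, which cannot fit inside an interval of length $4\varepsilon/t_n$ once $t_n>2\varepsilon/\delta$.
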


\begin{proof}
	Choose a sequence $(t_n)_{n\in\mathbb N}$ of positive real numbers such that $t_1>3$ and $t_{n+1}>t_n^2+3$, for every $n\in\mathbb N$.
	For every $n\in\mathbb N$, set
		$$
		M_n:=t_n^4,	\qquad \mu_n:=t_n^{-4},
		$$
	and
		$$
		I_n:=[t_n-1,t_n+1] \qquad \text{and} \qquad J_n:=[t_n^2-1,t_n^2+1].
		$$
	Since $t_n>3$, we have $t_n+1<t_n^2-1$, and hence $I_n$ lies strictly to the left of $J_n$.
	Moreover, by the choice of the sequence $(t_n)$, $t_n^2+1<t_{n+1}-1$, so $J_n$ lies strictly to the left of $I_{n+1}$.
	Thus, the intervals $I_n$ and $J_n$ are pairwise disjoint.
	
	We may therefore choose an increasing piecewise linear homeomorphism $h:\mathbb R\to\mathbb R$ such that, for every $n\in\mathbb N$,
		\begin{itemize}
			\item the slope of $h$ on $I_n$ is $M_n$,
			\item the slope of $h$ on $J_n$ is $\mu_n$,
			\item the slope of $h$ outside $\bigcup_n (I_n\cup J_n)$ is $1$.
		\end{itemize}
	Define
		$$
		d(x,y):=|h(x)-h(y)|
		$$
	for every $x,y\in\mathbb R$.
	Since $h$ is a homeomorphism, $d$ is compatible with the usual topology on $\mathbb R$.
	As $d$ induces the usual topology and multiplication is open for the usual topology on $\mathbb R$, multiplication is also open with respect to $d$.
	
	We claim, however, that multiplication is not uniformly open with respect to $d$.
	Fix $\varepsilon=1/2$.
	For each $n\in\mathbb N$, the function $h$ has slope $M_n$ on $I_n$, and therefore
			\begin{equation}\label{equ:1}
				|h(t_n\pm 1)-h(t_n)|=M_n>\varepsilon.
			\end{equation}
	We will now show that
		$$
		B_d(t_n,\varepsilon)=\left(t_n-\frac{\varepsilon}{M_n},\,t_n+\frac{\varepsilon}{M_n}\right).
		$$
	Indeed, if $x\in B_d(t_n,\varepsilon)$, then $|h(x)-h(t_n)|<\varepsilon$.
	Since $h$ is increasing and the points $t_n\pm 1$ satisfy \eqref{equ:1}, it follows that necessarily $x\in(t_n-1,t_n+1)\subset I_n$.
	On this interval, $h$ is affine with slope $M_n$.
	Therefore, $|h(x)-h(t_n)|=M_n|x-t_n|$.
	Thus, $|h(x)-h(t_n)|<\varepsilon$ if and only if $|x-t_n|<\frac{\varepsilon}{M_n}$, which proves the claimed equality.
	
	Let $x=t_n+\alpha$ and $y=t_n+\beta$ with $|\alpha|<\frac{\varepsilon}{M_n}$ and $|\beta|<\frac{\varepsilon}{M_n}$.
	Then, $xy-t_n^2=t_n(\alpha+\beta)+\alpha\beta$,	so
		$$
		|xy-t_n^2| \leq 2t_n\frac{\varepsilon}{M_n}+\frac{\varepsilon^2}{M_n^2} =: \rho_n.
		$$
	Since $M_n=t_n^4$, we have
		$$
		\rho_n=\frac{2\varepsilon}{t_n^3}+\frac{\varepsilon^2}{t_n^8} \xrightarrow[n\to\infty]{}0.
		$$
	In particular, there exists $n_0\in\mathbb N$ such that $\rho_n<1$ for every $n>n_0$.
	For such $n$, every product $xy$ with $x,y\in B_d(t_n,\varepsilon)$ belongs to $J_n$.
	On $J_n$ the slope of $h$ is $\mu_n$, hence
	$$
	d(xy,t_n^2)=|h(xy)-h(t_n^2)|
	=\mu_n|xy-t_n^2|
	\leq \mu_n\rho_n.
	$$
	Therefore,
	$$
	m\left(B_d(t_n,\varepsilon)\times B_d(t_n,\varepsilon)\right)
	\subseteq B_d(t_n^2,\delta_n),
	$$
	where
	$$
	\delta_n:=\mu_n\rho_n
	=\frac{2\varepsilon}{t_n^7}+\frac{\varepsilon^2}{t_n^{12}}
	\xrightarrow[n\to\infty]{}0.
	$$
	
	Assume now that multiplication were uniformly open.
	Then, for the above $\varepsilon>0$, there would exist $\delta>0$ such that, for all $p,q\in\mathbb R$,
	$$
	B_d(pq,\delta)\subseteq m\bigl(B_d(p,\varepsilon)\times B_d(q,\varepsilon)\bigr).
	$$
	Choose $n>n_0$ large enough so that $\delta_n<\delta$.
	Then
	$$
	m\bigl(B_d(t_n,\varepsilon)\times B_d(t_n,\varepsilon)\bigr)
	\subseteq B_d(t_n^2,\delta_n)
	\subsetneq B_d(t_n^2,\delta),
	$$
	which contradicts the previous inclusion applied with $p=q=t_n$.
	Thus, multiplication is not uniformly open with respect to $d$.
\end{proof}



\end{document}